\theoremstyle{plain} \newtheorem{thm}{Theorem} \theoremstyle{plain}
\newtheorem{lemma}[thm]{Lemma} \theoremstyle{definition}
\newtheorem*{defi}{Definition} \theoremstyle{remark}
 \theoremstyle{plain}
\newcommand{\parlengths}{\setlength{\parindent}{0pt}}
\begin{document} \date{\today}

\pdfbookmark[1]{ALL FINITE SUBDIVISION RULES ARE COMBINATORIALLY EQUIVALENT TO THREE-DIMENSIONAL SUBDIVISION RULES}{user-title-page}

\title{All finite subdivision rules are combinatorially equivalent to three-dimensional subdivision rules}

\author{Brian Rushton} \address{Department of Mathematics, Brigham Young
University, Provo, UT 84602, USA}
\email{brirush@mathematics.byu.edu}

\begin{abstract} Finite subdivision rules in high dimensions can be difficult to visualize and require complex topological structures to be constructed explicitly. In many applications, only the history graph is needed. We characterize the history graph of a subdivision rule, and define a combinatorial subdivision rule based on such graphs. We use this to show that a finite subdivision rule of arbitrary dimension is combinatorially equivalent to a three-dimensional subdivision rule. We use this to show that the Gromov boundary of special cubulated hyperbolic groups is a quotient of a compact subset of three-dimensional space, with connected preimages at each point.
\end{abstract}

\maketitle\parlengths

\section{Introduction}

Finite subdivision rules are a construction in geometric group theory originally described by Cannon, Floyd, and Parry in relation to Cannon's Conjecture \cite{subdivision}. A finite subdivision rule is a rule for replacing polygons in a tiling with a more refined tiling of polygons using finitely many combinatorial rules. An example of a subdivision rule is 2-dimensional barycentric subdivision, which replaces every triangle in a two-dimensional simplicial complex with six smaller triangles. Subdivision rules have also been used to study rational maps \cite{rational,cannon2012nearly}.

Cannon and Swenson and shown that two-dimensional subdivision rules for a three-dimensional hyperbolic manifold group contain enough information to reconstruct the group itself \cite{conformal, hyperbolic}. This was later generalized to show that many groups can be associated to a subdivision rule of some dimension, and that these subdivision rules \cite{CubeSubs, polysubs}:
\begin{enumerate}
\item captures all of the quasi-isometry information of the group via a graph called the \textbf{history graph}, and
\item has simple combinatorial tests for many quasi-isometry properties \cite{SubClass}.
\end{enumerate}

There are two difficulties in using these results:
\begin{enumerate}
\item These higher-dimensional subdivision rules are often difficult to visualize or to gain intuition for.

\item Subdivision rules often contain more information than is necessary; for instance, there are many classes of subdivision rules that have the same combinatorial structures but different topological structures (see weak equivalence classes of subdivision rules \cite{subdivision}).
\end{enumerate}

In this paper, we provide a solution to both of these problems by defining a \textbf{combinatorial subdivision rule}, which is a graph with a few simple combinatorial properties. We prove the following:

\newtheorem*{SubsComb}{Theorem \ref{SubsComb}}
\begin{SubsComb}
Let $(R,X)$ be a finite subdivision pair. Then the history graph $\Gamma(R,X)$ is a combinatorial subdivision rule.
\end{SubsComb}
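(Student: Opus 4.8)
The plan is to verify, one at a time, each of the defining conditions of a combinatorial subdivision rule for the graph $\Gamma(R,X)$, drawing everything from the finiteness of $R$ and from the fact that its subdivision map is cellular and restricts to a homeomorphism on each open cell. First I would fix the layered structure. Assign to each vertex of $\Gamma(R,X)$ --- that is, to each tile of $R^n(X)$ --- the level $n$, and split the edges into \emph{vertical} edges, joining a tile of $R^{n+1}(X)$ to the tile of $R^n(X)$ containing it, and \emph{horizontal} edges, joining two distinct tiles of $R^n(X)$ that are adjacent, i.e. that share a common face. Because $R^{n+1}(X) = R(R^n(X))$ subdivides each tile of $R^n(X)$, every tile of $R^{n+1}(X)$ lies in a unique tile of $R^n(X)$; this gives each positive-level vertex a unique parent, so the vertical edges form a forest whose roots are the tiles of $X = R^0(X)$. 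These observations dispatch the purely structural axioms: the level function is well defined, every edge respects the grading, and the parent map is single-valued.

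The heart of the argument is the finiteness condition, and the plan is to produce it from the finiteness of $R$ by \emph{typing} the vertices and edges. I would assign to each tile $t$ of $R^n(X)$ the type given by the image $f_n(t)$ of $t$ under the natural structure map $f_n\colon R^n(X) \to S_R$ to the subdivision complex $S_R$; since $S_R$ has only finitely many tiles there are finitely many vertex types, and symmetrically finitely many edge types, each recorded by the pair of endpoint types together with the face along which the two tiles meet. The key claim is that the type of a tile determines its \emph{subdivision pattern}, namely the number and types of its children and the horizontal edges among them. This holds because $\phi_R$ is a homeomorphism on each open cell, so the subdivision of a tile of type $\tau$ is a faithful copy of the fixed subdivision of the cell $\tau$ in $S_R$, independent of $n$ and of the location of $t$; the internal adjacencies among its children can then be read off from that single model.

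The remaining, and most delicate, point is the inheritance of horizontal edges under subdivision, which I expect to be the main obstacle. When two tiles $s,t$ of $R^n(X)$ meet along a common face $e$ and are subdivided, I must show that the adjacencies between the children of $s$ and the children of $t$ are again determined by type data. The difficulty is the compatibility of the two subdivisions induced on $e$: I would argue that the induced subdivision of a shared face depends only on the type of that face and not on which incident tile is used to compute it, so that $R(X)$ is a genuine global CW complex in which the children on the two sides of $e$ line up cell by cell. Granting this, the cross-edges between children of $s$ and children of $t$ are exactly the pairs that abut a common subcell of the subdivided face, and this correspondence is a function of the edge type of $\{s,t\}$ alone. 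Bounded valence then follows at once, since each tile has at most $\max_\tau(\text{number of children of } \tau)$ children and a number of neighbors bounded in terms of $S_R$, so only finitely many local isomorphism types occur. Assembling the structural axioms, the type-determined subdivision pattern, and the type-determined edge inheritance shows that $\Gamma(R,X)$ meets every requirement of a combinatorial subdivision rule.
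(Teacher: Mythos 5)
Your proposal follows essentially the same route as the paper's proof: the layered structure of the history graph gives the structural axioms (items 1--3) immediately, vertices and edges are labelled by their tile types under the structure map to $S_R$, and the remaining conditions follow because the subdivision map acts locally and replaces every tile of a given type in the same way. The only difference is one of detail --- you spell out the compatibility of the induced subdivisions on a shared face and the resulting inheritance of horizontal adjacencies, which the paper dispatches in a single sentence --- so the two arguments agree in substance.
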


\newtheorem*{CombSubs}{Theorem \ref{CombSubs}}
\begin{CombSubs}
Let $\Xi$ be a combinatorial subdivision rule. Then there is a 3-dimensional subdivision pair $(R,X)$ such that the history graph $\Gamma(R,X)$ is graph isomorphic to $\Xi$.
\end{CombSubs}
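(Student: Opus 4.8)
The plan is to reverse-engineer an honest three-dimensional finite subdivision rule from the combinatorial data stored in $\Xi$, and then to check that its history graph reproduces $\Xi$ exactly. Since $\Xi$ is a combinatorial subdivision rule it comes equipped with a level function, a parent relation carried by its vertical edges, a same-level adjacency relation carried by its horizontal edges, and only finitely many local isomorphism types of subdivision pattern. Reading off these finitely many types yields an abstract finite subdivision rule: a finite list of tile types $t_1,\dots,t_k$; for each type a finite labeled graph recording its children, their induced types, and their mutual adjacencies; and the combinatorial data describing how those children meet the boundary of the parent. The level-$0$ vertices of $\Xi$, together with the horizontal edges among them, prescribe the seed.

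The heart of the matter is to realize this abstract data geometrically in dimension three, and the governing principle is that $\R^3$ places no constraint on adjacency. In a surface the tiles that can meet along a given cell are severely restricted by planarity, but in three dimensions one can realize any finite graph as the adjacency graph of a decomposition of a $3$-ball into smaller $3$-balls. I would isolate this as a realization lemma: given a finite graph $G$ decorated with the boundary data above, there is a $3$-ball $B$ and a subdivision of $B$ into $3$-balls, one per vertex of $G$, so that two sub-balls meet along the prescribed $2$-dimensional face exactly when their vertices are joined in $G$, and so that the induced cell structure on $\partial B$ matches the prescribed subdivision of the parent's faces. Such a $B$ can be built by embedding $G$ in $\R^3$, replacing each vertex by a small $3$-ball, routing disjoint $1$-handles along the edges so adjacent children share a $2$-disk, and filling the complementary region so that the children genuinely tile $B$; the extra room of the third dimension is exactly what keeps the handles disjoint. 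Applying the lemma to each tile type produces the model tiles and their subdivisions, and gluing the seed tiles along $2$-disks according to the level-$0$ horizontal edges produces $X$.

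With tiles and subdivisions in hand, I would assemble $(R,X)$ and verify the axioms of a three-dimensional finite subdivision rule: there are finitely many tile types by construction, and the subdivisions agree across shared faces provided the boundary data fed into the realization lemma is chosen consistently for faces that get identified. I would then establish the isomorphism $\Gamma(R,X)\cong\Xi$ one level at a time. By construction the tiles at level $n$ biject with the level-$n$ vertices of $\Xi$, the parent--child edges of the subdivision match the vertical edges, and---because the realization creates a shared face for exactly the adjacencies recorded in each $G$ and for no others---the tile adjacencies match the horizontal edges.

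I expect the principal obstacle to be the global compatibility of the local realizations, and the demand that no spurious adjacencies appear. Realizing one tile type in isolation is flexible, but the subdivision induced on a face shared by two tiles must be literally the same seen from either side, and the sub-balls built inside one tile must not accidentally touch those of a neighbor except through the intended face. Arranging this---in effect making the realization lemma functorial with respect to the face identifications dictated by $\Xi$---is where the real care lies, and it is precisely the abundance of room in dimension three, as opposed to dimension two, that makes it possible.
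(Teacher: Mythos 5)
Your overall strategy matches the paper's: model each tile type by a $3$-ball with designated boundary disks for its adjacencies, realize the subdivision pattern inside that ball using the embedding freedom of $\R^3$, and build $X$ from the level-$1$ data. But your realization lemma, as stated, is false, and the failure is exactly at the step you flag as the "principal obstacle." You ask for a decomposition of a $3$-ball $B$ into sub-balls, one per child, that \emph{tile} $B$ while meeting in codimension $1$ exactly along the prescribed adjacencies. If the vertex subdivision is disconnected --- say two children with no horizontal edge between them, which the axioms of a combinatorial subdivision rule certainly permit --- then two closed $3$-balls with disjoint interiors whose union is $B$ must share a $2$-dimensional frontier inside $B$, since the interior of $B$ is connected and the common frontier of the two open pieces locally separates. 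That shared $2$-cell produces a horizontal edge in $\Gamma(R,X)$ with no counterpart in $\Xi$. If instead you fill the complementary region with extra tiles, those tiles become extra vertices of $\Gamma_n$, again breaking the isomorphism.

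The device you are missing is the ideal/non-ideal coloring that is built into the paper's definition of a (colored) finite subdivision rule and of the history graph: $\Lambda_n$ is the union of only the \emph{non-ideal} tiles, so ideal cells are invisible to $\Gamma(R,X)$. The paper therefore does not make the children tile $B(v)$. It glues them into a complex $N(v)$ meeting $\partial B(v)$ only in the designated subdisks, embeds $N(v)$ in the interior otherwise, and triangulates the leftover region $\overline{B(v)\setminus N(v)}$ as \emph{ideal} $3$-cells, which contribute nothing to the history graph. This also resolves a second point your proposal leaves open: for the rule to be iterable, the subdivision map $\phi_R$ must be defined on the ideal filler and must map the ideal set into itself; the paper handles this by attaching a separate ideal piece $I(v)$ to $S_R$ and sending each filler region $\overline{B(v)\setminus N(v)}$ onto it. With the coloring in hand, your construction essentially becomes the paper's; without it, the filling step cannot be completed.
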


These two theorems show that we can replace a finite subdivison rule of dimension $n$ with a 3-dimensional finite subdivision rule without changing its quasi-isometry properties. Also, to find a finite subdivision rule associated to a group, it is now only necessary to find a combinatorial subdivision rule quasi-isometric to the group.

\section{Definitions}
\subsection{Finite subdivision rules}\label{formalsection}

An \textbf{almost polyhedral complex of dimension} $n$ is a finite $n$-dimensional CW complex $Z$ with a fixed cell structure such that $Z$ is the union of its closed $n$-cells, and, for every closed $n$-cell $\tilde{s}$ of $Z$, there is a CW structure $s$
on a closed $n$-disk such that the subcells of $s$ are contained in
$\partial s$ and the characteristic map $\psi_s:s\rightarrow S_R$ which
maps onto $\tilde{s}$ restricts to a homeomorphism onto each open cell.

A \textbf{(colored) finite subdivision rule $R$ of dimension $n$} consists of:
\begin{enumerate}
\item An almost polyhedral complex $S_R$ of dimension $n$.
\item A finite $n$-dimensional complex $R(S_R)$ that is a subdivision of $S_R$.
\item A \textbf{coloring} of the cells of $S_R$, which is a partition of the set of cells of $S_R$ into an ideal set $I$ and a non-ideal set $N$ so that the union of the cells in $I$ is closed.
\item A \textbf{subdivision map} $\phi_R: R(S_R)\rightarrow S_R$, which is a
continuous cellular map that restricts to a homeomorphism on each open
cell, and which maps the union of all cells of $I$ into itself. \end{enumerate}

Each cell in the complex $S_R$ (with its appropriate
characteristic map) is called a \textbf{tile type} of $S_R$.

Given a finite subdivision rule $R$ of dimension $n$, an $R$-\textbf{complex}
consists of an $n$-dimensional CW complex $X$ which is the union of its
closed $n$-cells, together with a continuous cellular map $f:X\rightarrow
S_R$ whose restriction to each open cell is a homeomorphism. This map $f$ is called the \textbf{structure map} of $X$. All tile types with their characteristic maps are $R$-complexes.

We now describe how to subdivide an $R$-complex $X$ with structure map
$f:X\rightarrow S_R$, as described above. Recall that $R(S_R)$ is a
subdivision of $S_R$. By considering $f$ as a function from $X$ to $R(S_R)$, we can pull back the cell structure on $R(S_R)$
to the cells of $X$ to create $R(X)$, a subdivision of $X$. This gives
an induced map $f:R(X)\rightarrow R(S)$ that restricts to a
homeomorphism on each open cell. This means that $R(X)$ is an
$R$-complex with map $\phi_R \circ f:R(X)\rightarrow S_R$. We can
iterate this process to define $R^n(X)$ by setting $R^0 (X) =X$ (with
map $f:X\rightarrow S_R$) and $R^n(X)=R(R^{n-1}(X))$ (with map $\phi^n_R
\circ f:R^n(X)\rightarrow S_R$) if $n\geq 1$.

\begin{defi}
Let $R$ be a subdivision rule, and let $X$ be an $R$-complex. We will use $\Lambda_n$ to denote the union of all non-ideal tiles in the $n$th level of subdivision $R^n(X)$. The set $\mathop{\bigcap} \limits_{n} \Lambda_n \subseteq X$ is called the \textbf{limit set} and is denoted by $\Lambda=\Lambda(R,X)$. Its complement is called the \textbf{ideal set} and is denoted $\Omega=\Omega(R,X)$.
\end{defi}

\begin{defi} Let $R$ be a subdivision rule, and let $X$ be an $R$-complex. Let $\Gamma_n$ be the dual graph of $\Lambda_n$, i.e. a graph with

\begin{enumerate}
\item a vertex for each top-dimensional cell of $\Lambda_n$, and
\item an edge for each pair of top-dimensional cells that intersect in a codimension 1 subset.
\end{enumerate}

The \textbf{history graph} $\Gamma=\Gamma(R,X)$ consists of:
\begin{enumerate}
\item a single vertex $O$ called the \textbf{origin},
\item the disjoint union of the graphs $\{\Gamma_n\}$, whose edges are called \textbf{horizontal}, and
\item a collection of \textbf{vertical} edges which are induced by subdivision; i.e., if a vertex $v$ in $\Gamma_n$ corresponds to a $n$-cell $T$, we add an edge connecting $v$ to the vertices of $\Gamma_{n+1}$ corresponding to each of the $n$-cells contained in $R(T)$. We also connect the origin $O$ to every vertex of $\Gamma_0$.
\end{enumerate}
\end{defi}

(Note: In \cite{SubClass}, we used an alternative definition for history graph that had a vertex for every cell of any dimension in $\Lambda_n$, with edges being induced by inclusion. However, the two history graphs are quasi-isometric when any top-dimensional cells that intersect do so in a codimension 1 subset. The earliest form of a history graph, called the \textbf{history complex}, appears in \cite{cannon1999conformal}).

\subsection{Combinatorial subdivision rules}

First, we define labeled graphs and their morphisms. For background on labeled graphs, see \cite{gallian2009dynamic}. We use both edge and vertex labels.

\begin{defi}
A \textbf{(finitely) labeled graph} is a graph together with a map from the edges of the graph to a finite set of \textbf{edge labels}, and a map from the vertices of the graph to a finite set of \textbf{vertex labels}. For purposes of this article, we include unions of open edges as labeled graphs.
\end{defi}

\begin{defi}
A \textbf{labeled graph morphism} is a graph morphism between labeled graphs that preserves labels.
\end{defi}

\begin{defi}
The \textbf{open star} of a vertex is the union of a vertex with all the open edges that have that vertex as an endpoint.
\end{defi}

\begin{defi}
A finitely-labelled graph $\Xi$ is a \textbf{combinatorial subdivision rule} if it contains disjoint subgraphs $\Xi_n$ such that the following are satisfied:
\begin{enumerate}
\item $\Xi_0$ is a single vertex.
\item Every vertex is contained in some $\Xi_n$.
\item Every vertex $v$ of $\Xi_n$ for $n>0$ is connected to a unique vertex of $\Gamma_{n-1}$ called the \textbf{predecessor} of $v$. We define the predecessor of the unique vertex in $\Xi_0$ to be itself.
\item The open stars of any two vertices with the same label are labelled-graph isomorphic.
\item Condition 3 allows us to define a map $\pi:\Xi\rightarrow\Xi$, which is the graph morphism sending each vertex to its predecessor. We call the map $\pi$ the \textbf{predecessor map}. Then we require the preimages under $\pi$ of two edges with the same label to be labelled-graph isomorphic. A representative graph in such an isomorphism class is called an \textbf{edge subdivision}. Similarly, we require the preimage of two open stars of vertices with the same label to be labelled-graph isomorphic, and a representative graph in this isomorphism class is called a \textbf{vertex subdivision}.
\end{enumerate}
\end{defi}

\begin{lemma}
Each edge subdivision is a disjoint union of edges.
\end{lemma}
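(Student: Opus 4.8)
The plan is to take the term \emph{edge subdivision} at face value as the preimage $\pi^{-1}(e)$ of a single edge $e$ under the predecessor map, and—crucially—to use the convention announced just above, that unions of open edges are themselves labelled graphs. So I would regard $e$ as an \emph{open} edge, with its two endpoints (the images $a=\pi(\cdot)$ and $b=\pi(\cdot)$) excluded, and aim to show that $\pi^{-1}(e)$ contains no vertices at all and is precisely a pairwise-disjoint collection of open edges, each carried isomorphically onto $e$. Reading the statement this way is what makes it true and tractable: the stronger-sounding assertion that the edges over $e$ form a matching between the children of $a$ and the children of $b$ is false for genuine subdivision rules (one child of $a$ can meet two children of $b$ across a subdivided codimension-$1$ face), so the content must be the open-edge statement.

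The key step is to exploit that $\pi$ is a labelled-graph morphism, exactly as it is introduced in condition~5, and hence a cellular map of the underlying $1$-complexes. As such it sends every vertex to a vertex (its predecessor) and sends each open edge either isomorphically onto an open edge or, in the case that the two endpoints of that edge have a common predecessor, collapses it to a single vertex; by cellularity no open edge can be carried onto a proper part of another edge. From this I would pin down $\pi^{-1}(e)$ directly. First, no vertex of $\Xi$ lies in $\pi^{-1}(e)$: each vertex maps to a vertex, and a vertex in the image of $\pi$ is an endpoint of $e$, not an interior point of the \emph{open} edge $e$. Second, by cellularity every edge that meets $\pi^{-1}(e)$ is carried entirely onto $e$, so $\pi^{-1}(e)$ is exactly the set of open edges $f$ with $\pi(f)=e$. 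The edges that $\pi$ collapses to a vertex, and the edges mapped onto some other edge, contribute nothing.

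Finally I would invoke the elementary fact that distinct open edges of a graph are disjoint subsets of the $1$-complex, so the open edges comprising $\pi^{-1}(e)$ are pairwise disjoint; that is exactly the claim that the edge subdivision is a disjoint union of edges. I do not anticipate a genuine obstacle: the argument is organizational rather than computational, and the only points requiring care are the open-versus-closed edge bookkeeping (used to exclude vertices from the preimage) and the explicit appeal to cellularity (used both to exclude partial edges and to handle the collapsed-edge case, whose edges must be omitted from $\pi^{-1}(e)$). Once those are stated cleanly, the conclusion is immediate.
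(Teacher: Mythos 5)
Your argument is correct and is essentially the paper's own proof: both rest on the observation that the predecessor map, being a graph morphism, sends vertices to vertices (so no vertex lies over the interior of an open edge) and carries each open edge in the preimage homeomorphically onto $e$, whence the preimage is a disjoint union of open edges. Your treatment is slightly more careful than the paper's in explicitly handling edges that $\pi$ collapses to a vertex, but the route is the same.
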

\begin{proof}
A labelled graph morphism is a homeomorphism when restricted to an open edgea, so each edge subdivision is a union of edges. Because no vertices are included, the edges are necessarily disjoint.
\end{proof}

\section{Main Theorems}

\begin{thm}\label{SubsComb}
Let $(R,X)$ be a finite subdivision pair. Then the history graph $\Gamma(R,X)$ is a combinatorial subdivision rule.
\end{thm}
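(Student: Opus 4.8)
The plan is to verify each of the five defining conditions of a combinatorial subdivision rule directly from the construction of the history graph $\Gamma(R,X)$, taking the disjoint pieces $\Xi_n$ to be the dual graphs $\Gamma_n$ of $\Lambda_n$, together with the origin $O$ serving as $\Xi_0$. Conditions (1) and (2) are immediate: $\Gamma_0 = \{O\}$ is a single vertex by definition, and every vertex of $\Gamma$ lies in exactly one $\Gamma_n$ since $\Gamma$ is the disjoint union of the $\Gamma_n$ (the vertical edges do not add vertices). For condition (3), I would use the vertical edge structure: each top-dimensional cell $T$ in $\Lambda_n$ arises as a cell contained in $R(T')$ for a unique parent cell $T'$ in $\Lambda_{n-1}$, because subdivision partitions each cell of $\Lambda_{n-1}$ into the cells of $R$ applied to it. This gives the unique predecessor, and I would set the origin as its own predecessor to match the convention in the definition.

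The substantive content lies in conditions (4) and (5), which assert a finiteness and local-homogeneity property. First I must specify the labeling. The natural choice is to label each vertex of $\Gamma_n$ by the tile type of the corresponding $n$-cell under the structure map $f$ (or $\phi_R^n \circ f$), i.e. by which tile type of $S_R$ the cell maps onto, and to label each horizontal edge by the combinatorial type of the codimension-$1$ gluing it records. Because $R$ is a \emph{finite} subdivision rule, there are only finitely many tile types and finitely many ways two tiles can be adjacent, so these labels indeed take values in a finite set. The key observation is that the open star of a vertex $v$ corresponding to a cell $T$ is determined, as a labeled graph, purely by the tile type of $T$: the horizontal neighbors of $T$ are exactly the cells sharing a codimension-$1$ face with $T$, and the combinatorics of this adjacency is dictated by the tile type via the characteristic map. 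Thus two vertices with the same label have labeled-graph isomorphic open stars, giving condition (4).

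For condition (5) I would argue that the predecessor map $\pi$ here is precisely the vertical ``coarsening'' map sending each cell of $R^n(X)$ to the cell of $R^{n-1}(X)$ that contains it, so $\pi^{-1}$ of the open star of a vertex records exactly how the subdivision rule $R$ refines one tile into its children and how those children are glued. Since subdivision of a tile $T$ depends only on the tile type of $T$ (the subdivision $R(S_R)$ is fixed, and $R(T)$ is its pullback), the preimage under $\pi$ of an open star depends only on the vertex label, and likewise the preimage of an edge depends only on the edge label. This yields the required vertex subdivisions and edge subdivisions, and the earlier lemma confirms each edge subdivision is a disjoint union of edges.

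The main obstacle I anticipate is bookkeeping rather than conceptual: I must choose the edge and vertex labels carefully enough that ``same label'' captures \emph{all} the local combinatorial data needed for the isomorphisms in (4) and (5), while still taking values in a finite set. In particular, a vertex label must encode not just the tile type but enough of the adjacency pattern that the horizontal edges in the open star, with their own labels, are reproduced faithfully; and an edge label must encode the pair of adjacent tile types together with the codimension-$1$ face along which they meet, so that its $\pi$-preimage is well-defined up to labeled isomorphism. Getting these labels consistent across the two conditions simultaneously, and verifying that the induced isomorphisms genuinely preserve the vertical edge structure under $\pi$, is where the care is required; the finiteness needed to make the whole scheme work is exactly the hypothesis that $R$ is a finite subdivision rule.
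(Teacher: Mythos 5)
Your proposal is correct and follows essentially the same route as the paper's own (much terser) proof: label vertices by non-ideal top-dimensional tile types and edges by the non-ideal codimension-$1$ tile types along which two tiles meet, deduce condition (4) from the fact that a tile type determines the combinatorics of its boundary, and deduce condition (5) from the fact that subdivision is local and replaces a tile of a given type in a fixed way. The bookkeeping concern you raise at the end (that the label must determine the full open star, not just the tile type) is a real subtlety that the paper's proof also passes over without further comment, so you are, if anything, slightly more careful than the original.
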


\begin{proof}
Let $\Gamma$ be the history graph of a subdivision pair of dimension $n$. Items 1-3 in the definition of a combinatorial subdivision rule are automatically satisfied.

Vertex labels in $\Gamma$ correspond to non-ideal $n$-dimensional tile types, and edge labels correspond to $(n-1)$-dimensional tile types that are non-ideal and which are contained between two $n$-dimensional tiles in $X$. The tile type of an $n$-dimensional cell determines the tile type of its boundary, so the edge labels surrounding a given vertex label in $\Gamma$ are unique, satisfying item 4.

Finally, item 5 is satisfied by the nature of a subdivision rule: a subdivision rule acts locally, and always replaces a tile with a given type the exact same way.

\end{proof}

\begin{thm}\label{CombSubs} Let $\Xi$ be a combinatorial subdivision rule. Then there is a 3-dimensional subdivision pair $(R,X)$ such that the history graph $\Gamma(R,X)$ is graph isomorphic to $\Xi$.
\end{thm}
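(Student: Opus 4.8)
The plan is to extract from the finite combinatorial data in $\Xi$ an honest geometric $3$-dimensional subdivision rule $R$ together with a seed $R$-complex $X$, and then to verify that the dual-graph bookkeeping defining $\Gamma(R,X)$ reproduces $\Xi$ as a labelled graph. First I would use the finitely many vertex labels of $\Xi$ as the top-dimensional (non-ideal) tile types: for each vertex label $\ell$ I build a tile type $B_\ell$, a closed $3$-ball whose boundary sphere carries a CW structure containing one distinguished $2$-cell for every edge in the open star of an $\ell$-labelled vertex, each such $2$-cell labelled by the corresponding edge label. Condition (4) guarantees this open star, and hence the boundary pattern of $B_\ell$, is well defined independently of the chosen vertex. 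Next I would use the finitely many edge labels as the shared face types: for each edge label $e$ I fix a single model $2$-disk $D_e$, so that whenever two tiles are glued along an $e$-face the identification is the identity of $D_e$ and is automatically consistent from both sides.

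The subdivision structure comes from condition (5). For each vertex label $\ell$ its vertex subdivision is a finite labelled graph describing the tiles of the first subdivision of an $\ell$-tile with their labels and mutual adjacencies; I would realize this graph as a genuine decomposition of $B_\ell$ into smaller $3$-balls, each a copy of the appropriate $B_m$, whose adjacency pattern and labels match the vertex subdivision, and this defines $\phi_R$ on $B_\ell$. The induced cutting of each boundary face $D_e$ is exactly the edge subdivision attached to $e$; since in a combinatorial subdivision rule the preimage of an edge under the predecessor map $\pi$ depends only on the edge label, the subdivision of a shared face computed inside one tile agrees with the one computed inside its neighbor, so these face subdivisions glue and $\phi_R$ is well defined on all of $R(S_R)$. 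I then assemble $S_R$ from the tile types $B_\ell$ with their face identifications, color every cell non-ideal so that $\Lambda_n$ is all of $R^n(X)$, and take $X$ to be the $3$-complex realizing $\Xi_1$, one tile per vertex and one shared face per edge. A straightforward induction on $n$ then shows that the dual graph of $\Lambda_n$ is isomorphic, respecting labels, to $\Xi_{n+1}$, with the vertical edges of $\Gamma(R,X)$ matching $\pi$ and the origin $O$ corresponding to the single vertex of $\Xi_0$.

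The step I expect to be the main obstacle, and the reason the target dimension is three rather than two, is the realizability claim used twice above: that an arbitrary finite labelled graph can be realized as the dual graph of a $3$-dimensional complex, both for the seed $X$ and for each tile's decomposition. In dimension two this fails in general, since dual graphs of surface subdivisions are constrained (for example by planarity), whereas the level graphs $\Xi_n$ and the vertex subdivisions may contain non-planar graphs such as $K_5$ or $K_{3,3}$. In dimension three there is no such obstruction: I can realize a prescribed adjacency pattern directly by assigning a $3$-ball to each vertex and gluing two balls along a $2$-disk in their boundary spheres for each edge, using that a sphere accommodates arbitrarily many disjoint $2$-cells and that gluing $3$-balls along boundary disks yields a complex whose $3$-cells meet precisely in codimension one. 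The real work is to perform this realization coherently across every tile subdivision at once, so that no spurious vertices, horizontal edges, or vertical edges are created and so that the cuts on each shared face agree from both sides; the dependence of open stars and of $\pi$-preimages on labels alone, guaranteed by conditions (4) and (5), is exactly what makes this coherence automatic, leaving only the careful geometric construction and the inductive check.
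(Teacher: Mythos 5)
Your overall architecture matches the paper's: one $3$-ball $B(v)$ per vertex label with labelled boundary disks for the edges of its open star, a shared model disk per edge label, the vertex subdivision realized as a union of smaller balls glued along disks, the edge subdivision as a subdivision of each shared face, the seed $X$ built from $\Xi_1$, and an induction on levels. However, there is a genuine gap at the one point where you depart from the paper: you propose to ``color every cell non-ideal so that $\Lambda_n$ is all of $R^n(X)$,'' which forces the complex $N(v)$ realizing the vertex subdivision to be an \emph{exact} decomposition of $B(v)$ into sub-balls. That is impossible in general. The vertex subdivision is an arbitrary finite labelled graph and need not be connected or realizable as the adjacency graph of an honest tiling of a ball: if a vertex label has two children with no horizontal edge between them, any two closed $3$-cells with disjoint interiors whose union is $B(v)$ must meet along the (two-dimensional) frontier of one of them, producing a spurious horizontal edge in the dual graph; more generally an exact tiling creates codimension-one contacts not present in the vertex subdivision. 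You correctly identify ``no spurious edges'' as the crux, but conditions (4) and (5) only give label-consistency of the gluing data; they do not make the geometric coherence ``automatic,'' and passing to dimension three does not by itself remove the obstruction once you insist the sub-tiles fill the ball.

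The paper's resolution, and the idea your proposal is missing, is precisely the ideal set: $N(v)$ is embedded in $B(v)$ so that it meets $\partial B(v)$ only in the prescribed exterior disks (which line up with the subdisks of the subdivided faces $D'(e)$), and the closed complement $\overline{B(v)\setminus N(v)}$ is subdivided into almost polyhedral $3$-cells and colored \emph{ideal}; likewise the part of each boundary sphere away from the distinguished disks is ideal. Since $\Gamma_n$ is the dual graph of $\Lambda_n$, the union of \emph{non-ideal} top-dimensional cells only, the ideal collar absorbs all unwanted adjacencies, accommodates disconnected vertex subdivisions, and gives the subdivision map somewhere to send the leftover cells (the complexes $I(v)$ in the paper) while keeping the ideal set invariant, as the definition of a colored subdivision rule requires. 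Your remarks on why dimension two fails (planarity of dual graphs of surface subdivisions versus $K_5$, $K_{3,3}$) and why a sphere in $\mathbb{R}^3$ carries arbitrarily many disjoint disks are correct and consistent with the paper's implicit motivation, but the construction as you state it does not yield $\Gamma(R,X)\cong\Xi$ without reintroducing the ideal coloring.
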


\begin{proof}
We need to construct the subdivision complex $S_R$, the related complex $R(S_R)$, and the subdivision map $\phi_R$ explicitly, as well as the $R$-complex $X$.

For each vertex label $v$, let $B(v)$ be a closed ball in $\mathbb{R}^3$. We place a cell structure on $B(v)$ such that:
\begin{enumerate}
\item there is one 3-cell in $B(v)$, and
\item the boundary sphere of $B(v)$ contains disjoint disks, one for each edge in the open star of a vertex with the label $v$. We give each disk the standard cell structure with one vertex, one edge, and one 2-cell. We consider the remainder of the sphere `ideal'.
\end{enumerate}

Let $Y$ be the quotient of the disjoint union of the $B(v)$ given by identifying boundary disks corresponding to edges with the same label. Thus, $Y$ has one disk for each edge label, via an orientation-preserving map. For each disk $D(e)$, let $D^\prime(e)$ be a new cell structure on $D(e)$ that contains disjoint sub-disks, one for each edge in the edge subdivision corresponding to $D(e)$.

The vertex subdivision corresponding to a vertex label is a union of open vertex stars. Given a label $v$, we construct a complex $N(v)$ taking the disjoint of copies of the $B(w)$, one for each vertex in the vertex subdivision of $v$, and identifying boundary disks that correspond to the same edge in the vertex subdivision, via an orientation-preserving map. Because each copy of a $B(w)$ deformation retracts onto compactification of the closed star of the corresponding vertex (here, the compactification merely adds endpoints onto the boundary edges), the whole complex $N(v)$ deformation retracts onto the compactification of the vertex subdivision. The unidentified boundary disks of $N(v)$ are called the \textbf{exterior disks} of $N(v)$. Each exterior disk of $N(v)$ corresponds to a boundary edge of the vertex subdivision, which in turn corresponds to an edge in the edge subdivision of one of the edges of the original vertex star.Thus, each exterior disk corresponds to a subdisk in some $D^\prime(e)$.

We now embed each $N(v)$ into $B(v) \subseteq Y$ so that:
\begin{enumerate}
\item the intersection of $N(v)$ with the boundary of $B(v)\subseteq Y$ is the union of the exterior disks of $N(v)$,
\item each exterior disk of $N(v)$ matches up with the appropriate subdisk of the appropriate $D'(e)$,
\item the closed complement $\overline{B(v)\setminus N(v)}$ is divided into 3-cells that are almost polyhedral (which we can do by triangulating and using barycentric subdivision twice, if necessary). We label this complement as ideal.
\end{enumerate}

This gives us a new cell structure on each $B(v)$, which we can call $B'(v)$, and thus a new cell structure on $Y$, which we can call $Y^\prime$; the two complexes $Y$ and $Y'$ have the same underlying topological space. We now let $I(v)$ be a cell complex isomorphic to $B(v)\setminus N(v)$. Note that the boundary of $I(v)$ can be partitioned into its intersection with the boundary of $B(v)$ (the \textbf{outer} portion of $\partial I(v)$) and its intersection with the boundary of $N(v)$ (the \textbf{inner} portion of $\partial I(v)$). Attach $I(v)$ to $Y$ by using the identity map on the outer portion and, on the inner boundary, mapping each part of $\partial N(v)$ to the $\partial B(v)$ it is a copy of.

Call this new complex $S_R$. If we replace the part corresponding to $Y$ with the $Y'$ structure, we get a new complex which we call $R(S_R)$.

There is a natural map from $R(S_R)$ to $S_R$, which is obtained by:
\begin{enumerate}
\item mapping each boundary sub-disk of the $N(v)$'s corresponding to an edge label $e$ to the disk $D(e)\subseteq S_R$ via an orientation-preserving map,
\item mapping each $I(v)$ to itself via the identity,
\item sending each closed 3-cell in $N(v)$ to the $B(v)$ it is a copy of, and
\item sending each complex $\overline{B(v)\setminus N(v)}$ to the $I(v)$ that is a copy of it.
\end{enumerate}

We call this map $\phi_R$, and, together with $S_R$ and $R(S_R)$, this forms a finite subdivision rule $R$ of dimension 3.

To create the cell complex $X$, recall that $\Xi_1$ is the set of all things of distance 1 from the origin in $\Xi$. Let $X$ consist of a copy of $B(v)$ for each open vertex star in $\Xi_1$ with label $v$, where we identify boundary disks of two $B(v)$'s that correspond to the same edge of $\Xi_1$.

Then the dual graph of $X$ is graph isomorphic to $\Xi_1$.

Let the structure map $f:X\rightarrow S_R$ be given by mapping each copy of a $B(v)$ to the corresponding $B(v)$ in $S_R$. Recall that the subdivision $R(X)$ is obtained by `pulling back' the cell structure of $R(S_R)$ via $f$. In this case, it replaces each copy of a $B(v)$ with $B(v)^\prime$. The non-ideal cells of $B(v)^\prime$ are the interior cells of $N(v)$ and its boundary disk; thus, the dual graph of $R(X)$ is obtained by replacing each open vertex star of the dual graph of $XS$ with its vertex subdivision, and each edge with its edge subdivision. Thus, the dual graph of $R(X)$ must be isomorphic to $\Xi_2$. By continuing this process, we see that $R^n(x)$ is dual to $\Xi_{n+1}$, and that the vertical edges of $\Gamma(R,X)$ connect a vertex to its predecessor under the subdivision.

This concludes the proof.
\end{proof}

\section{Applications}

\begin{thm}
Every hyperbolic group that is the fundamental group of a compact special cube complex has a Gromov boundary that is the quotient of a compact subset of $\mathbb{R}^3$ with connected preimages.
\end{thm}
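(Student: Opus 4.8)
The plan is to obtain a subdivision rule for $G$ from known constructions, trade its dimension for a Euclidean target using the two main theorems, and then read off the Gromov boundary from the limit set of the resulting three-dimensional rule. First I would invoke \cite{CubeSubs,polysubs}: since $G$ is hyperbolic and acts geometrically on the CAT(0) cube complex covering its compact special model, it is among the groups shown there to admit a finite subdivision pair $(R,X)$ whose history graph $\Gamma(R,X)$ is quasi-isometric to $G$, and whose seed complex $X$ may be taken compact. Compactness of $X$ forces each $\Lambda_n$ to consist of finitely many tiles, so every horizontal level of $\Gamma(R,X)$ is finite and $\Gamma(R,X)$ is locally finite, growing only in the vertical direction.

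Next I would apply the two main theorems. By Theorem \ref{SubsComb}, $\Xi := \Gamma(R,X)$ is a combinatorial subdivision rule, and by Theorem \ref{CombSubs} there is a three-dimensional subdivision pair $(R',X')$ with $\Gamma(R',X') \cong \Xi$ as labelled graphs. Since $\Xi_1 = \Gamma_1$ is finite, the complex $X'$ built in that proof is a finite union of $3$-balls $B(v)$ glued in pairs along boundary disks; such a complex is a compact handlebody and hence embeds in $\mathbb{R}^3$. Fixing such an embedding, the limit set $\Lambda' := \Lambda(R',X')$ is a closed subset of the compact complex $X'$, and so a compact subset of $\mathbb{R}^3$. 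Because $\Gamma(R',X') \cong \Gamma(R,X)$ is quasi-isometric to the hyperbolic group $G$, it is itself hyperbolic and $\partial_\infty \Gamma(R',X') \cong \partial_\infty G$.

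It then remains to realize $\partial_\infty G$ as a quotient of $\Lambda'$ with connected preimages. I would define a map $\rho : \Lambda' \to \partial_\infty G$ by sending $p$ to the endpoint of a vertical ray running through the nested non-ideal tiles of the $\Lambda_n'$ that contain $p$. Every geodesic ray in $\Gamma(R',X')$ must visit arbitrarily deep levels, since the graph is locally finite with only finitely many vertices below each level; tracking the tiles such a ray passes through produces, via nested compactness, a point of $\Lambda'$ mapping to its endpoint, which gives surjectivity. Two tile sequences through a common $p$ differ by horizontal moves of length one and are therefore asymptotic, so $\rho$ is well defined; a mesh-going-to-zero estimate makes it continuous. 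As $\Lambda'$ is compact and $\partial_\infty G$ Hausdorff, $\rho$ is automatically a quotient map. Finally, the fiber $\rho^{-1}(\xi)$ is a nested intersection $\bigcap_n U_n$, where $U_n$ is the union of the closed level-$n$ tiles within bounded horizontal distance of the ray to $\xi$; since dual-adjacent top cells of $X'$ meet in a codimension-one face, each $U_n$ is a connected continuum, and a nested intersection of compact connected sets is connected, so every preimage is connected.

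The step I expect to demand the most care is the analysis of $\rho$. Its well-definedness and, especially, its continuity and surjectivity are where the hyperbolic geometry genuinely enters: one must control, via a mesh estimate, exactly which vertical rays are mutually asymptotic and verify that small neighborhoods in $\Lambda'$ correspond to rays that fellow-travel far into $\Gamma(R',X')$. By contrast, the remaining ingredients are comparatively robust. The embedding $X' \hookrightarrow \mathbb{R}^3$ and the compactness of $\Lambda'$ reduce to the finiteness of $\Xi_1$ and the handlebody structure of $X'$, and the connectedness of the fibers follows from the purely combinatorial fact—common to the original rule and its three-dimensional replacement—that dual-adjacent top-dimensional cells meet in codimension one, so that horizontal balls in $\Gamma(R',X')$ give connected unions of tiles.
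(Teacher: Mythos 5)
Your proposal follows the same skeleton as the paper's proof: obtain a subdivision pair whose history graph is quasi-isometric to $G$ from the earlier cubulation results, pass through Theorems \ref{SubsComb} and \ref{CombSubs} to replace it by a three-dimensional pair with an isomorphic history graph, and then present the Gromov boundary as a quotient of the limit set of the new pair. The difference is in how the last step is handled. The paper treats it as a black box: it cites \cite{SubClass} for the statement that the limit set of any subdivision pair whose history graph is quasi-isometric to a hyperbolic group admits a canonical quotient onto $\partial G$ with connected preimages, and simply observes that this applies to the new three-dimensional pair. You instead sketch a direct construction of the quotient map $\rho:\Lambda'\rightarrow\partial_\infty G$ via nested tile sequences along vertical rays, together with surjectivity, continuity, the compact-to-Hausdorff quotient argument, and fiber connectedness from nested intersections of connected unions of codimension-one-adjacent tiles. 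That sketch is essentially a re-derivation of the \cite{SubClass} result, and you correctly identify where the real work lies (the mesh estimate, asymptoticity of rays, and the nestedness of the sets $U_n$, which needs geodesic stability); none of that is needed if you are willing to quote the prior result. One point where your write-up is actually more careful than the paper's: you address why $X'$, a union of balls glued along boundary disks, embeds in $\mathbb{R}^3$ (so that $\Lambda'$ really is a subset of $\mathbb{R}^3$), which the paper passes over silently; note, though, that "glued balls form a handlebody, hence embed" requires the gluings to produce an orientable result, which is where the orientation-preserving identifications in the proof of Theorem \ref{CombSubs} come in.
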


\begin{proof}
In \cite{SubClass}, we showed that the limit set $\Lambda$ of a subdivision pair whose history graph is quasi-isometric to a hyperbolic group $G$ has a canonical quotient onto the Gromov boundary $\partial G$, with connected preimages.

In \cite{rushton2013subdivision}, we showed that every compact special cube complex has a fundamental group that is quasi-isometric to the history graph of some subdivision pair.

Combining these results with those of this paper, we see that every hyperbolic group that is the fundamental group of a compact special cube complex has a compact limit set $\Lambda$ that is a subset of $\mathbb{R}^3$, and which quotients onto the Gromov boundary with connected preimage.
\end{proof}

Note that the Hahn-Mazurkiewicz theorem states that every locally-connected continuum is a quotient of the unit interval \cite{willard2004general}. An example of this is the famous Peano space-filling curve. However, in most of these examples, the preimages are not connected.

\bibliographystyle{plain} \bibliography{Hyperbolicbib}

\end{document}